\let\oldsection\section
\renewcommand\section{\setcounter{equation}{0}\oldsection}
\newtheorem{theorem}{Theorem}[section]
\newtheorem{lemma}{Lemma}[section]
\newtheorem{proposition}{Proposition}[section]
\newtheorem{definition}{Definition}[section]
\newtheorem{remark}{Remark}[section]
\date{April 11, 2014}
\begin{document}

\title[Global Strong Solutions to Incompressible Ericksen-Leslie System in $\mathbb R^3$]{Global Strong Solutions to Incompressible Ericksen-Leslie System in $\mathbb R^3$}

\author{Wenya~Ma}
\address[Wenya~Ma]{College of Information and Management of Science, Henan Agricultural University, Zhengzhou 450002, P.R. China}
\email{wyma@lsec.cc.ac.cn }

\author{Jinkai~Li}
\address[Jinkai~Li]{Department of Computer Science and Applied Mathematics, Weizmann Institute of Science, Rehovot 76100, Israel}
\email{jklimath@gmail.com}

\author{Huajun~Gong}
\address[Huajun Gong]{The Institute of Mathematical Sciences, University of Science and Technology of China, Anhui 230026, P.R. China }
\email{huajun84@hotmail.com }

\subjclass[2010]{AMS 35Q35, 76D03.}
\keywords{well-posedness; strong solution; liquid crystals.}

\allowdisplaybreaks
\begin{abstract}
In this paper, we consider the Cauchy problem to the Ericksen-Leslie system of liquid crystals in $\mathbb R^3$. Global well-posedness of strong solutions are obtained under the condition that the product of $\|u_0\|_2+\|\nabla d_0\|_2$ and $\|\nabla u_0\|_2+\|\nabla^2d_0\|_2$ is suitably small. This result can be viewed as a supplement to the local existence and blow up criteria discussed in \cite{Hong1}.
\end{abstract}

\maketitle

\allowdisplaybreaks

\section{Introduction}
\vskip 0.5cm \indent Continuum theory for nematic liquid crystals
was initiated by Oseen\cite{os} in static version, and
was reformulated by Frank\cite{fr}. Ericksen\cite{er1, er3} and Leslie\cite{le2, le3} proposed the corresponding dynamic model by extending their work, and the model can be successfully used to model the situation without defects.
The Oseen-Frank free energy of liquid crystal occupied in region $\Omega\subset \mathbb R^3$ with a conf\mbox{}iguration $d\in H^1(\Omega; S^2)$ is
$$E(d; \Omega)=\int_\Omega W(d, \nabla d)\mathrm d x,$$
where the Oseen-Frank density is given by
$$W(d, \nabla d)=k_1(\mathrm{div} d)^2+k_2(d\cdot\mathrm{curl} d)^2+k_3|d\times\mathrm{curl}d|^2$$
for positive constants $k_i, i=1,2,3$.
One can refer to \cite{Lin0, Hong0} for more details on static theory of liquid crystals.

In general, the incompressible Ericksen-Leslie model reads as follows,
\begin{subequations}
\begin{eqnarray} &&\partial_tu^i+(u\cdot\nabla)u^i-\Delta u^i+\partial_iP=-\partial_j(\partial_id^kW_{p_j^k}(d, \nabla d)),\\
&&\qquad\nabla \cdot u=0,\qquad |d|=1,\\
&&\partial_td^i+(u\cdot\nabla) d^i =\partial_j(W_{p^i_{j}}(d, \nabla d))-W_{d^i}(d, \nabla d)\nonumber\\
&&\qquad\qquad\qquad\quad\,\,-(\partial_j(W_{p^k_{j}}(d, \nabla d))-W_{d^k}(d, \nabla d))d^kd^i,
\end{eqnarray}\label{main}
\end{subequations}
where $u=(u^1, u^2, u^3)$ is the velocity of the f\mbox{}luid, $d=(d^1, d^2, d^3)$ is the unit molecular direction, and $P$ is the pressure. Additionally,
$i, j, k=1, 2, 3$, and the Einstein summation is used.

The above Ericksen-Leslie model is a coupled system by Navier-Stokes equations and the gradient f\mbox{}low of the Oseen-Frank model. Both of their developments are heuristic to our further discussion. The well-known result on Navier-Stokes equations is about the existence and partial regularity of global suitable weak solutions presented in \cite{ckn, Lin1}, and the Serrin or Beale-Kato-Majda type blow up criteria presented in \cite{Serrin, bkm}. The development related to the heat f\mbox{}low of harmonic maps, which is a specif\mbox{}ic example of the above gradient f\mbox{}low, is the existence and partial regularity of global weak solutions presented in \cite{Struwe, CStruwe}. Accordingly, many literatures studied the simplif\mbox{}ied version of nematic liquid crystal equations, where required $k_1=k_2=k_3$. In particular, the local strong solutions were obtained in \cite{Wd}, and the blow up criteria in \cite{huang}. For the more general case, \cite{Hong2} started the study on existence and regularity in $\mathbb R^2$, and \cite{Hong1} on local existence and blow up criteria in $\mathbb R^3$.

As discussed in \cite{Hong1}, we consider the Cauchy problem to the general Ericksen-Leslie system above in $\mathbb R^3$.
The following initial data are imposed to (\ref{main}):
\begin{equation}\label{initial cond}
(u, d)|_{t=0}=(u_0, d_0).
\end{equation}
Moreover, we always suppose without any further mention that the initial data $u_0$ and $d_0$ satisfy
\begin{equation}
  \label{assum}
  u_0\in H^1_\sigma(\mathbb R^3), \quad d_0-d_0^*\in H^2(\mathbb R^3), \quad |d_0|=1,
\end{equation}
where $d_0^*$ is a constant unit vector, and $H^1_\sigma(\mathbb R^3)=\{v\in H^1(\mathbb R^3)|\,\, \mathrm{div} v=0\}$.

Throughout this paper, we use $C$ for a generic positive constant which may change from line to line, $\|\cdot\|_{q}$ for the $L^q(\mathbb R^3)$ norm with $q\geq1$, and $\int\!|\cdot|\mathrm dx$ for $\int_{\mathbb R^3}\!\!|\cdot|\mathrm dx$.

This paper is devoted to global existence of strong solutions under certain smallness conditions.
For convenience, we present def\mbox{}initions related to global strong solutions before statement of our main result.

\begin{definition}
Given $T>0,$ a couple $(u, d)$ is a strong solution to system (\ref{main})-(\ref{initial cond}) on $\mathbb R^3\times(0, T),$ if it has the regularity properties
$$u, \nabla d\in C([0, T]; H^1(\mathbb R^3))\cap L^2(0, T; H^2(\mathbb R^3)),$$
$$\partial_t u\in L^2(0, T; L^2(\mathbb R^3)),\, \partial_t d\in L^2(0, T; H^1(\mathbb R^3);$$
and it satisf\mbox{}ies (\ref{main}) a.e. on $\mathbb R^3\times(0, T)$, and the initial condition (\ref{initial cond}).
\end{definition}
\begin{definition}
A f\mbox{}inite positive number $\mathcal T$ is called the maximal existence time of a strong solution $(u, d)$ to system (\ref{main})-(\ref{initial cond}) on $\mathbb R^3\times(0, T)$, if for any $T<\mathcal T$, $(u, d)$ is a strong solution to system (\ref{main})-(\ref{initial cond}) on $\mathbb R^3\times(0, T)$, and
$$\lim\limits_{T\to\mathcal T\!\!-}\sup\limits_{0\leq t\leq T}(\|u(t)\|_{H^1}^2+\|\nabla d(t)\|_{H^1}^2)=\infty.$$
\end{definition}
\begin{definition}
A couple $(u, d)$ is called a global strong solution to system (\ref{main})-(\ref{initial cond}) on $\mathbb R^3\times(0, \infty),$ if it is a strong solution to system (\ref{main})-(\ref{initial cond}) on $\mathbb R^3\times(0, T)$ for any f\mbox{}inite time $T$.
\end{definition}

\begin{theorem}\label{thm}
Under the condition (\ref{assum}), system (\ref{main})-(\ref{initial cond}) has a unique global strong solution, provided
$$(\|u_0\|_2+\|\nabla d_0\|_2)(\|\nabla u_0\|_2+\|\nabla^2 d_0\|_2)\leq\varepsilon_0,$$
where $\varepsilon_0$ is a small positive constant depending only on $k_1, k_2, k_3$.
\end{theorem}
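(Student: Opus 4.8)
The plan is to combine a local existence result (which we may take from \cite{Hong1}) with a global \emph{a priori} estimate that is propagated by the smallness of the scale-invariant quantity $A_0:=(\|u_0\|_2+\|\nabla d_0\|_2)(\|\nabla u_0\|_2+\|\nabla^2 d_0\|_2)$. So the core of the argument is a single differential inequality for the higher-order energy
$$
\mathcal E(t):=\tfrac12\int\bigl(|\nabla u|^2+|\nabla^2 d|^2\bigr)\,\mathrm dx
\qquad\text{(or a suitable equivalent using }W_{p^k_j}\text{)},
$$
coupled with the basic energy law. First I would record the \emph{basic energy identity}: testing the $u$-equation with $u$ and the $d$-equation with the Euler--Lagrange operator of $W$, and using the constraint $|d|=1$, one obtains
$$
\frac{\mathrm d}{\mathrm dt}\Bigl(\tfrac12\|u\|_2^2+E(d)\Bigr)+\int\bigl(|\nabla u|^2+|\text{(E-L operator of }W)|^2\bigr)\,\mathrm dx=0,
$$
so that $\|u(t)\|_2^2+\|\nabla d(t)\|_2^2$ is bounded by its initial value for all time (here one uses the coercivity of the Oseen--Frank density, i.e.\ that $W(d,\nabla d)$ is comparable to $|\nabla d|^2$ up to null-Lagrangian terms, which is where the dependence on $k_1,k_2,k_3$ enters). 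Call this bounded quantity $B(t)\le B(0)=:B_0$ with $B_0\simeq(\|u_0\|_2+\|\nabla d_0\|_2)^2$.

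Next I would derive the higher-order estimate. Apply $\nabla$ to the $u$-equation and test with $\nabla u$; differentiate the $d$-equation appropriately and test with $\Delta d$ (or with $\partial_t d$, depending on which formulation is cleanest), and add. The Navier--Stokes part produces the usual trilinear term $\int (u\cdot\nabla)u\cdot\Delta u$, and the coupling and the nonlinear Oseen--Frank terms $\partial_j(\partial_i d^k W_{p^k_j})$ produce a finite list of cubic and quartic terms in $(\nabla u,\nabla^2 d,\nabla d)$ — crucially every one of them, after integration by parts and Hölder, is controlled by a product
$$
C\,\|\nabla u\|_2^{1/2}\|\nabla^2(u,d)\|_2^{3/2}\bigl(\text{lower-order factor}\bigr)
$$
via the Gagliardo--Nirenberg inequality $\|f\|_3\le C\|f\|_2^{1/2}\|\nabla f\|_2^{1/2}$ and $\|f\|_6\le C\|\nabla f\|_2$ in $\mathbb R^3$; the quartic terms coming from the $d^kd^i$ projection and from $W_{d^i}$ are handled the same way using $|d|=1$ to keep $\|d\|_\infty$ bounded. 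Absorbing the highest-order factors with Young's inequality gives
$$
\frac{\mathrm d}{\mathrm dt}\mathcal E(t)+\tfrac12\int\bigl(|\nabla^2 u|^2+|\nabla^3 d|^2\bigr)\,\mathrm dx\le C\,\mathcal E(t)^{?}\,\bigl(\text{positive powers}\bigr),
$$
and, importantly, choosing the test functions so that the \emph{lowest}-order factor in each nonlinear term is an $L^2$ norm of $(u,\nabla d)$, one arrives at an inequality of the schematic form
$$
\frac{\mathrm d}{\mathrm dt}\mathcal E+c\int\bigl(|\nabla^2 u|^2+|\nabla^3 d|^2\bigr)\le C\,B(t)\,\mathcal E(t)\int\bigl(|\nabla^2 u|^2+|\nabla^3 d|^2\bigr),
$$
so that as long as $C\,B(t)\,\mathcal E(t)\le c/2$ the dissipation dominates and $\mathcal E$ is nonincreasing.

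The closing argument is then a standard continuity/bootstrap: set $\Phi(t):=B(t)\mathcal E(t)$. At $t=0$ we have $\Phi(0)\simeq A_0^2\le\varepsilon_0^2$. Since $B$ is nonincreasing and, on any interval where $\Phi\le c/(2C)$, $\mathcal E$ is nonincreasing, $\Phi$ stays $\le\Phi(0)$; hence choosing $\varepsilon_0$ small enough that $C\varepsilon_0^2<c/2$, the condition is never violated, so $\mathcal E(t)\le\mathcal E(0)$ and $B(t)\le B_0$ for all $t$ in the maximal interval of existence. Combined with the time-integrated dissipation bound and the equations themselves (to control $\partial_t u$ in $L^2_tL^2_x$ and $\partial_t d$ in $L^2_tH^1_x$), this yields
$$
\sup_{0\le t\le T}\bigl(\|u(t)\|_{H^1}^2+\|\nabla d(t)\|_{H^1}^2\bigr)\le C(A_0,\|u_0\|_2,\|\nabla d_0\|_2)<\infty
$$
for every finite $T$, which contradicts the blow-up characterization in Definition~1.2 unless $\mathcal T=\infty$; uniqueness follows from the local theory (an energy estimate on the difference of two solutions). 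The main obstacle is the second step: organizing the many nonlinear terms produced by the general Oseen--Frank density $W$ (rather than the Dirichlet simplification $k_1=k_2=k_3$) so that every term genuinely carries a factor of the low-order energy $B(t)$ — i.e.\ getting the structure $\dot{\mathcal E}\lesssim B\,\mathcal E\cdot(\text{dissipation})$ rather than merely $\dot{\mathcal E}\lesssim\mathcal E^{3/2}$ — since only the former closes globally from smallness of the \emph{product} $A_0$ rather than from smallness of $\mathcal E(0)$ alone. This is exactly where the algebraic identities for $W_{p^k_j}$ and the null-Lagrangian structure (and hence the $k_i$-dependence of $\varepsilon_0$) must be used carefully.
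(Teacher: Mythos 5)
Your proposal follows essentially the same route as the paper: the basic energy law for the low-order quantity, a second-order energy inequality whose nonlinear term is interpolated so that it appears as (low norm)$\times$(high norm)$\times$(dissipation) and can be absorbed when the scale-invariant product is small, and a continuity/bootstrap argument combined with the local theory and blow-up criterion of \cite{Hong1}. The "main obstacle" you flag — controlling the nonlinear terms of the general Oseen--Frank density — is handled in the paper by quoting the second-order energy inequality from Lemma~3.2 of \cite{Hong1}, so your outline matches the actual argument.
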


\begin{remark}
$(\|u\|_2+\|\nabla d\|_2)(\|\nabla u\|_2+\|\nabla^2 d\|_2)$ is a scaling invariance under the transform
$$u_\lambda(x, t)=\lambda u(\lambda x, \lambda^2t), \quad d_\lambda(x, t)= d(\lambda x, \lambda^2t).$$
Therefore, our result can be viewed as the global existence of strong solutions in critical space.
\end{remark}

\section{Proof of Theorem \ref{thm}}
\setcounter{equation}{0}
Recalling the expression of $W(d,\nabla d)$ in the introduction, one can easily check that
\begin{eqnarray*}
&&W(z, p)\geq a|p|^2,\quad\,\, W_{p_\alpha^ip_\beta^j}(z, p)\xi_\alpha^i\xi_\beta^j\geq a|\xi|^2,
\end{eqnarray*}
for any $z\in\mathbb R^3, p, \xi\in\mathbb M^{3\times3},$ where $a=\min\{k_1,k_2,k_3\}$, and
\begin{eqnarray*}
&&|W(d, \nabla d)|\leq C|d|^2|\nabla d|^2,\qquad |W_{d^i}(d, \nabla d)|\leq C|d||\nabla d|^2,\\
&&|W_{d^id^j}(d, \nabla d)|\leq C|\nabla d|^2,\qquad\, |W_{p_\alpha^i}(d, \nabla d)|\leq C|d|^2|\nabla d|,\\
&&|W_{p_\alpha^ip_\beta^j}(d, \nabla d)|\leq C|d|^2,\qquad\,\, |W_{d^ip_\beta^j}(d, \nabla d)|\leq C|d||\nabla d|,
\end{eqnarray*}
for a positive constant $C$ depending only on $k_1, k_2, k_3$. These inequalities will be used frequently without any further mention in this paper.

We f\mbox{}irst cite the following local existence and blow up criteria of strong solutions, which is a special case of those in Hong-Li-Xin \cite{Hong1}.

\begin{lemma} (Local existence and blow up criteria of strong solutions)
\label{lem0}
Suppose that the condition (\ref{assum}) holds true. Then system (\ref{main})--(\ref{initial cond}) has a unique local strong solution $(u,d)$ on $\mathbb R^3\times(0,T)$, for a positive number $T$ depending only on the initial data and $k_1,k_2,k_3$.
\end{lemma}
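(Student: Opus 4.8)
The plan is to construct the solution directly by a linearized iteration scheme together with a fixed-point argument, rather than quoting \cite{Hong1}. I would work in the complete space
$$X_T=\{(u,d):\ u,\nabla d\in C([0,T];H^1)\cap L^2(0,T;H^2),\ \partial_tu\in L^2(0,T;L^2),\ \partial_td\in L^2(0,T;H^1),\ d-d_0^*\in C([0,T];H^2)\},$$
normed as in Definition 1.1, and fix a closed ball $B_R\subset X_T$ whose radius $R$ is a fixed multiple of $\|u_0\|_{H^1}+\|\nabla d_0\|_{H^1}$. Starting from $(u^0,d^0)=(u_0,d_0)$, I would define $(u^{n+1},d^{n+1})$ by solving two decoupled \emph{linear} problems. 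For the velocity: a nonstationary Stokes system for $u^{n+1}$ with the transport term $(u^n\cdot\nabla)u^n$ and the director stress $-\partial_j(\partial_id^{n,k}W_{p_j^k}(d^n,\nabla d^n))$ placed on the right-hand side. For the director: a uniformly parabolic linear system for $d^{n+1}$ obtained by freezing the top-order coefficient matrix $W_{p_\alpha^ip_\beta^j}(d^n)$, which depends on $d^n$ but not on $\nabla d^n$ (the Oseen--Frank density being quadratic in $p$) and satisfies the Legendre--Hadamard bound $\ge a|\xi|^2$; the convection $(u^n\cdot\nabla)d^{n+1}$, the term $W_{d^i}(d^n,\nabla d^n)$, and the projection term $(\partial_jW_{p_j^k}-W_{d^k})d^{n,k}d^{n,i}$ are moved into the source.

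The crux is a uniform-in-$n$ bound. Using maximal $L^2$-regularity for the Stokes operator and for the frozen-coefficient parabolic operator (whose ellipticity constant $a>0$ is uniform), I would estimate the $X_T$-norm of $(u^{n+1},d^{n+1})$ by a polynomial in the $X_T$-norm of $(u^n,d^n)$ times positive powers of $T$, plus the data norm. The nonlinear sources are controlled by the pointwise $W$-bounds recorded above and by three-dimensional Gagliardo--Nirenberg/Ladyzhenskaya inequalities (controlling $L^3$ and $L^6$ norms by $H^1$, and absorbing the dangerous products into the coercive terms). Fixing $R$ and then taking $T$ small — with $T$ depending only on $\|u_0\|_{H^1}+\|\nabla d_0\|_{H^1}$ and $k_1,k_2,k_3$, exactly as claimed — makes $B_R$ invariant. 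A parallel estimate on the differences $(u^{n+1}-u^n,d^{n+1}-d^n)$ in a lower-order norm (say $L^\infty L^2\cap L^2H^1$) shows the map is a contraction after possibly shrinking $T$, producing a fixed point $(u,d)\in B_R$. The unit-length constraint is recovered for the limit: writing $f^i=\partial_jW_{p_j^i}-W_{d^i}$, the $d$-equation reads $(\partial_t+u\cdot\nabla)d^i=f^i-(f\cdot d)d^i$, so $\phi=|d|^2-1$ solves the linear transport equation $(\partial_t+u\cdot\nabla)\phi=-2(f\cdot d)\phi$ with $\phi|_{t=0}=0$, whose only solution in the relevant class is $\phi\equiv0$, giving $|d|=1$ a.e.

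The regularity demanded by Definition 1.1 then follows from the maximal-regularity estimates and the equations themselves, the time-continuity $u,\nabla d\in C([0,T];H^1)$ coming from $L^2H^2\cap H^1L^2$ by Lions--Magenes interpolation; uniqueness on $[0,T]$ is the same difference estimate applied to two solutions with identical data, closed by Gr\"onwall's inequality. Finally, since the existence time depends only on $\|u_0\|_{H^1}+\|\nabla d_0\|_{H^1}$ and $k_i$, a standard continuation argument yields the blow-up characterization of the maximal time in Definition 1.2.

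I expect the main obstacle to be closing the highest-order a priori estimate for the director. Because the energy is the \emph{general} Oseen--Frank density rather than the simplified Dirichlet energy, differentiating $\partial_j(W_{p_j^i}(d,\nabla d))$ generates genuinely quasilinear top-order terms $W_{p_\alpha^ip_\beta^j}(d)\,\partial^2 d$ with $d$-dependent coefficients, together with cubic gradient terms of type $W_{ppp}(d)(\nabla d)^3$ and $W_{ppd}(d)(\nabla d)^2$, which in three dimensions lie at the borderline of the Sobolev embeddings and must be absorbed into the coercive term $a\|\nabla^2d\|_2^2$ using the smallness of $T$. Keeping $T$ dependent only on the stated data norms while simultaneously propagating the constraint through the frozen-coefficient scheme is the delicate point; everything else reduces to routine energy analysis built on the coercivity and growth bounds listed above.
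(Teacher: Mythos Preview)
Your plan is sound and follows the standard route to local well-posedness for quasilinear parabolic--Stokes couplings: freeze coefficients, use maximal regularity for Stokes and for the uniformly elliptic (in fact strongly Legendre, not merely Legendre--Hadamard, as the paper records) director operator, close in a ball by smallness of $T$, contract in a weaker topology, and then recover $|d|=1$ from the transport equation for $|d|^2-1$. All of this is correct in outline, and the dependence of $T$ only on $\|u_0\|_{H^1}+\|\nabla d_0\|_{H^1}$ and the $k_i$ is exactly what the scheme gives.

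Two remarks. First, the paper does \emph{not} prove this lemma at all: it is simply cited as a special case of the local existence and blow-up results in \cite{Hong1}. So you are supplying an independent proof where the authors chose to quote. What your direct construction buys is self-containment; what the citation buys is brevity, since \cite{Hong1} already handles the general Ericksen--Leslie system (with the more involved stress and Leslie terms) and this lemma is a genuine corollary. Second, one of your anticipated obstacles is illusory: because the Oseen--Frank density is \emph{quadratic} in $p$, one has $W_{ppp}\equiv 0$, so there are no cubic-in-gradient terms arising from differentiating the principal part. The only lower-order contributions from $\partial_j W_{p_j^i}$ are of type $W_{pd}(d)\nabla d$, which are linear in $\nabla d$ with bounded coefficients (since $|d|=1$) and cause no difficulty at the $H^1$ level. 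The genuinely borderline products come instead from the projection term $(f\cdot d)d$ and the convection, and these you have identified correctly.
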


For strong solutions, we have the following basic energy balance law.

\begin{lemma}(Basic energy balance law, see e.g., Hong \cite{Hong2})\label{lem1} Let $(u,d)$ be a strong solution to system (\ref{main})--(\ref{initial cond}) on $\mathbb R^3\times(0,T)$. Then
\begin{equation}\frac{\mathrm d}{\mathrm dt}\int\left(\frac{|u|^2}{2}+W(d, \nabla d)\right)\mathrm dx+\int(|\nabla u|^2+|\partial_td+(u\cdot\nabla) d|^2)\mathrm dx=0,
\end{equation}
for any $t\in(0,T)$.
\end{lemma}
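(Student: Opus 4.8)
The plan is to derive the identity by testing each evolution equation against its natural energy-conjugate variable, adding the two resulting balances, and observing that the two coupling terms cancel exactly. Throughout I would use the strong-solution regularity ($u,\nabla d\in L^\infty_tH^1\cap L^2_tH^2$, $\partial_t u\in L^2_tL^2$, $\partial_t d\in L^2_tH^1$) to justify differentiating under the integral sign and integrating by parts, all boundary terms at infinity vanishing. It is convenient to abbreviate the tension field $\Delta_i:=\partial_j(W_{p^i_j}(d,\nabla d))-W_{d^i}(d,\nabla d)$ and to set $N:=\partial_t d+(u\cdot\nabla)d$, so that the director equation reads $N^i=\Delta_i-(\Delta_k d^k)d^i$; that is, $N$ is the tangential projection of $\Delta$ onto $T_dS^2$.

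First I would test the momentum equation with $u^i$ and integrate over $\mathbb R^3$. Incompressibility $\nabla\cdot u=0$ kills the convective term, $\int(u\cdot\nabla)u^i\,u^i=\tfrac12\int(u\cdot\nabla)|u|^2=0$, and the pressure term, $\int\partial_iP\,u^i=-\int P\,\nabla\cdot u=0$; integrating the viscous term by parts produces $\int|\nabla u|^2$, and integrating the Ericksen-stress term by parts produces the coupling contribution. The outcome is the velocity balance
\[
\frac{\mathrm d}{\mathrm dt}\int\frac{|u|^2}{2}\,\mathrm dx+\int|\nabla u|^2\,\mathrm dx=\int W_{p^k_j}\,(\partial_i d^k)(\partial_j u^i)\,\mathrm dx.\qquad(\star)
\]

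Next I would differentiate $\int W(d,\nabla d)\,\mathrm dx$ in time and integrate the $W_{p^i_j}$ term by parts in space, which gives $\frac{\mathrm d}{\mathrm dt}\int W=-\int\Delta_i\,\partial_t d^i\,\mathrm dx$. Writing $\partial_t d^i=N^i-(u\cdot\nabla)d^i$ splits this into two pieces. For the first, the constraint $|d|=1$ forces $d\cdot N=\tfrac12(\partial_t+u\cdot\nabla)|d|^2=0$, so the orthogonal decomposition $\Delta_i=N^i+(\Delta_k d^k)d^i$ yields $\int\Delta_i N^i=\int|N|^2$; the second piece is the cross term $\int\Delta_i\,u^j\partial_j d^i\,\mathrm dx$. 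Hence $\frac{\mathrm d}{\mathrm dt}\int W=-\int|N|^2\,\mathrm dx+\int\Delta_i\,u^j\partial_j d^i\,\mathrm dx$.

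The crux is to show this cross term is precisely the negative of the right-hand side of $(\star)$. Expanding $\Delta_i=\partial_m W_{p^i_m}-W_{d^i}$ and integrating the first part by parts in $x_m$ produces $-\int W_{p^i_m}(\partial_m u^j)(\partial_j d^i)-\int W_{p^i_m}u^j\,\partial_m\partial_j d^i$, while the incompressibility identity $0=\int u^j\partial_j W=\int u^j\bigl(W_{d^i}\partial_j d^i+W_{p^i_m}\partial_j\partial_m d^i\bigr)$ shows the term $-\int W_{d^i}u^j\partial_j d^i$ cancels $-\int W_{p^i_m}u^j\partial_m\partial_j d^i$. What survives is $\int\Delta_i u^j\partial_j d^i=-\int W_{p^i_m}(\partial_m u^j)(\partial_j d^i)$, and relabelling indices identifies this with $-\int W_{p^k_j}(\partial_j u^i)(\partial_i d^k)$, the negative of the right-hand side of $(\star)$. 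Adding the velocity balance to the director balance therefore annihilates both coupling terms and leaves exactly the claimed identity. I expect the main obstacle to be purely organizational: keeping the index bookkeeping straight in this final cancellation and invoking $\nabla\cdot u=0$ and $|d|=1$ each at the correct place; the strong-solution regularity guarantees that every integration by parts and every exchange of derivatives used above is legitimate.
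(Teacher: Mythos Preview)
Your argument is correct and is exactly the standard energy-method derivation: test the momentum equation with $u$, differentiate $\int W(d,\nabla d)$ in time, and use $\nabla\cdot u=0$ together with $|d|=1$ to make the two coupling terms cancel. The paper itself does not supply a proof for this lemma but simply refers to Hong~\cite{Hong2}, and what you have written is precisely the computation one finds there.
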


 However, the estimate on the second order spatial derivatives of the director f\mbox{}ield $d$ was not included in the basic energy balance law. This estimate is given by the following lemma.

\begin{lemma}(First order energy inequality)\label{lem2}
Let $(u,d)$ be a strong solution to system (\ref{main})--(\ref{initial cond}) on $\mathbb R^3\times(0,T)$. Then
\begin{equation}
\frac{\mathrm d}{\mathrm dt}\int|\nabla d|^2\mathrm dx+2a\int|\nabla^2 d|^2\mathrm dx\leq C\int(|u|^2+|\nabla d|^2)|\nabla^2 d|\mathrm dx,
\end{equation}
for any $t\in(0,T)$.
\end{lemma}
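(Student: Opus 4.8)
The plan is to test the director equation in (\ref{main}) against $-\Delta d$. It is convenient first to rewrite its right-hand side as $(\delta^{ik}-d^id^k)g^k$, where $g^i:=\partial_j(W_{p_j^i}(d,\nabla d))-W_{d^i}(d,\nabla d)$, noting that $(\delta^{ik}-d^id^k)$ is the orthogonal projection onto $T_dS^2$. Multiplying by $-\Delta d^i$, summing in $i$, and integrating over $\mathbb R^3$ --- all the manipulations below being justified by the regularity of strong solutions and the decay of $u$, $\nabla d$, $d-d_0^*$ --- gives
\[
\int\partial_td^i(-\Delta d^i)\,\mathrm dx+\int(u\cdot\nabla)d^i(-\Delta d^i)\,\mathrm dx=-\int g^i\Delta d^i\,\mathrm dx+\int(g^kd^k)(d^i\Delta d^i)\,\mathrm dx .
\]
The first term on the left equals $\tfrac12\tfrac{\mathrm d}{\mathrm dt}\int|\nabla d|^2\,\mathrm dx$ after an integration by parts, and the convection term is bounded by $\int|u||\nabla d||\nabla^2d|\,\mathrm dx\le\tfrac12\int(|u|^2+|\nabla d|^2)|\nabla^2d|\,\mathrm dx$ by Young's inequality; this is the only source of the $|u|^2$ contribution on the right of the asserted inequality.

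For $-\int g^i\Delta d^i\,\mathrm dx$, the principal part $\int\partial_j(W_{p_j^i}(d,\nabla d))(-\Delta d^i)\,\mathrm dx$ is integrated by parts twice and $\partial_mW_{p_j^i}(d,\nabla d)$ expanded by the chain rule, producing $-\int W_{p_j^ip_l^k}(d,\nabla d)\,\partial_m\partial_ld^k\,\partial_m\partial_jd^i\,\mathrm dx$ plus a term carrying a factor $W_{p_j^id^k}(d,\nabla d)$. Using the convexity inequality $W_{p_\alpha^ip_\beta^j}\xi_\alpha^i\xi_\beta^j\ge a|\xi|^2$ with $\xi_j^i=\partial_m\partial_jd^i$ for each fixed $m$ and summing in $m$, the first of these is $\le-a\int|\nabla^2d|^2\,\mathrm dx$; the second, together with the $W_{d^i}$ term, is $\le C\int|\nabla d|^2|\nabla^2d|\,\mathrm dx$ after invoking $|W_{p_\alpha^id^k}|\le C|d||\nabla d|$, $|W_{d^i}|\le C|d||\nabla d|^2$ and $|d|=1$.

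\textbf{The main obstacle} is the term $\int(g^kd^k)(d^i\Delta d^i)\,\mathrm dx$ coming from the constraint part of the nonlinearity: bounding its factors separately produces a quantity controlled only by $\int|\nabla^2 d|^2\,\mathrm dx$, which cannot be absorbed into the left-hand side. The way around this is to exploit the pointwise identities forced by $|d|\equiv1$. Differentiating $|d|^2=1$ twice yields $d^i\Delta d^i=-|\nabla d|^2$, so the term equals $-\int(g^kd^k)|\nabla d|^2\,\mathrm dx$; expanding $g^k$ and transferring the $\partial_j$ in $\partial_j(W_{p_j^k})$ onto $d^k|\nabla d|^2$, every resulting term is, by $|d|=1$ and the bounds $|W_{p_\alpha^k}|\le C|d|^2|\nabla d|$, $|W_{d^k}|\le C|d||\nabla d|^2$, pointwise dominated either by $|\nabla d|^2|\nabla^2d|$ or by $|\nabla d|^4$. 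Terms of the first kind are already of the desired form, and for the rest one uses the \emph{same} constraint identity once more, namely $|\nabla d|^2=|d^i\Delta d^i|\le|\Delta d|\le\sqrt3\,|\nabla^2d|$, to get $\int|\nabla d|^4\,\mathrm dx\le C\int|\nabla d|^2|\nabla^2d|\,\mathrm dx$. Collecting all the estimates and multiplying through by $2$ gives the claimed inequality.
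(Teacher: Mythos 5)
Your proposal is correct and follows essentially the same route as the paper: test the director equation against $\Delta d$, handle the principal term by two integrations by parts plus the convexity of $W$ in $p$ to extract $a\int|\nabla^2d|^2$, and use the constraint identity $d\cdot\Delta d=-|\nabla d|^2$ both to tame the projection term and to absorb $\int|\nabla d|^4$ into $\int|\nabla d|^2|\nabla^2d|$. The only (cosmetic) difference is that for the term $(g\cdot d)(d\cdot\Delta d)$ you transfer $\partial_j$ onto $d^k|\nabla d|^2$ by parts, whereas the paper expands $\partial_\alpha W_{p_\alpha}$ pointwise via the chain rule; both yield the same bounds.
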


\begin{proof}
Multiplying (\ref{main}c) by $\Delta d^i$, and integrating on $\mathbb R^3$, then we get
\begin{eqnarray}
&&\frac12\frac{\mathrm d}{\mathrm dt}\int|\nabla d|^2\mathrm dx+\int\partial_\alpha W_{p_{_\alpha}}(d, \nabla d)\cdot\Delta d\mathrm dx\nonumber\\
&=&\int\left\{(u\cdot\nabla) d+W_d(d, \nabla d)+\left[(\partial_\alpha W_{p_{_\alpha}}(d, \nabla d)-W_d(d, \nabla d))\cdot d\right]d\right\}\cdot\Delta d\mathrm dx\nonumber\\
&=&\int\left[((u\cdot\nabla) d+W_d(d, \nabla d))\cdot\Delta d-(\partial_\alpha W_{p_{_\alpha}}(d, \nabla d)-W_d(d, \nabla d))\cdot d|\nabla d|^2\right]\mathrm dx\nonumber\\
&\leq&C\int\left[(|u||\nabla d|+|\nabla d|^2)|\Delta d|+(|\nabla^2 d|+|\nabla d|^2)|\nabla d|^2\right]\mathrm dx\nonumber\\
&\leq&C\int\left[(|u|^2+|\nabla d|^2)|\nabla^2 d|+|\nabla d|^4\right]\mathrm dx\nonumber\\
&\leq&C\int(|u|^2+|\nabla d|^2)|\nabla^2 d|\mathrm dx,\label{f1}
\end{eqnarray}
where in the last step we have used the fact $|\nabla d|^2=-d\cdot\Delta d.$

It follows from integrating by parts that
\begin{eqnarray}
\int\partial_\alpha W_{p_{_\alpha}}(d, \nabla d)\cdot\Delta d\mathrm dx&=&\int\partial_\beta W_{p_{_\alpha}}(d, \nabla d)\cdot\partial^2_{\alpha\beta} d\mathrm dx\nonumber\\
&=&\int [W_{p_{_\alpha}p_{_\gamma}}(d, \nabla d)\partial^2_{\gamma\beta}d+W_{p_{_\alpha}d^j}(d, \nabla d)\partial_\beta d^j]\cdot\partial^2_{\alpha\beta} d\mathrm dx\nonumber\\
&\geq&a\int|\nabla^2 d|^2\mathrm dx-C\int|\nabla d|^2|\nabla^2 d|\mathrm dx.\label{f2}
\end{eqnarray}
So combining (\ref{f1}) with (\ref{f2}), we get
\begin{equation*}
\frac{\mathrm d}{\mathrm dt}\int|\nabla d|^2\mathrm dx+2a\int|\nabla^2 d|^2\mathrm dx\leq C\int(|u|^2+|\nabla d|^2)|\nabla^2 d|\mathrm dx,
\end{equation*}
proving the conclusion.
\end{proof}

\begin{lemma}(Second order energy inequality, see Lemma 3.2 in Hong-Li-Xin \cite{Hong1})\label{lem3}
Let $(u,d)$ be a strong solution to system (\ref{main})--(\ref{initial cond}) on $\mathbb R^3\times(0,T)$. Then
\begin{eqnarray*}
&&\frac{\mathrm d}{\mathrm dt}\int(|\nabla u|^2+|\nabla^2 d|^2)\mathrm dx+\int\left(|\nabla^2 u|^2+\frac32a|\nabla^3 d|^2\right)\mathrm dx\\
&\leq& C\int(|u|^2+|\nabla d|^2)(|\nabla u|^2+|\nabla^2 d|^2)\mathrm dx,
\end{eqnarray*}
for any $t\in(0,T)$.
\end{lemma}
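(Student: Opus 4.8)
The plan is to derive two separate differential inequalities — one controlling $\|\nabla u\|_2^2$ and one controlling $\|\nabla^2 d\|_2^2$ — and then add them, arranging the integrations by parts so that the highest-order coupling between the fluid and the director cancels, exactly as it does in the basic energy law (Lemma \ref{lem1}). For the velocity I would test the momentum equation (\ref{main}a) with $-\Delta u$. The pressure drops out since $\int\partial_iP\,\Delta u^i\,\mathrm dx=-\int P\,\Delta(\nabla\cdot u)\,\mathrm dx=0$, and on $\mathbb R^3$ one has $\|\Delta u\|_2=\|\nabla^2 u\|_2$, so the principal term supplies the full dissipation $\|\nabla^2 u\|_2^2$. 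For the director I would apply $\Delta$ to (\ref{main}c) and pair the result with $\Delta d$; here $\int\partial_t\Delta d\cdot\Delta d=\tfrac12\frac{\mathrm d}{\mathrm dt}\|\Delta d\|_2^2=\tfrac12\frac{\mathrm d}{\mathrm dt}\|\nabla^2 d\|_2^2$, again using $\|\Delta d\|_2=\|\nabla^2 d\|_2$.

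For the director dissipation I would isolate the leading elliptic contribution $\partial_j(W_{p^i_j})\approx W_{p^i_jp^l_\beta}\partial^2_{j\beta}d^l$. Integrating by parts after the $\Delta$ and invoking the ellipticity $W_{p^i_\alpha p^j_\beta}\xi^i_\alpha\xi^j_\beta\ge a|\xi|^2$ produces $2a\|\nabla^3 d\|_2^2$; the commutator terms generated when derivatives fall on the $d$-dependent coefficients (each such derivative costing a factor $|\nabla d|$, since $|W_{p^i_\alpha p^j_\beta d^l}|\le C|\nabla d|$ after $|d|=1$) are treated by Young's inequality and absorb the margin $\tfrac12 a\|\nabla^3 d\|_2^2$, leaving the stated coefficient $\tfrac32 a$. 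The forcing $W_{d^i}$ and the constraint term $(\partial_jW_{p^k_j}-W_{d^k})d^kd^i$ are simplified through $|d|=1$, $d\cdot\nabla d=0$ and $d\cdot\Delta d=-|\nabla d|^2$, which turn them into quartic expressions in $\nabla d,\nabla^2 d$.

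The convection terms come next. The fluid self-convection gives $\int(u\cdot\nabla)u\cdot\Delta u\,\mathrm dx\le\int|u||\nabla u||\nabla^2 u|\,\mathrm dx\le\tfrac18\|\nabla^2 u\|_2^2+C\int|u|^2|\nabla u|^2\,\mathrm dx$, already of the target form. Expanding the director convection as $\Delta((u\cdot\nabla)d)=(u\cdot\nabla)\Delta d+2\partial_\gamma u\cdot\nabla\partial_\gamma d+\Delta u\cdot\nabla d$ and pairing with $\Delta d$, the first piece yields $\int(u\cdot\nabla)\Delta d\cdot\Delta d\,\mathrm dx=\tfrac12\int u\cdot\nabla|\Delta d|^2\,\mathrm dx=0$ by $\nabla\cdot u=0$, the middle piece is controlled by Young against both dissipations together with $\int|\nabla u|^2|\nabla d|^2\,\mathrm dx$, while the last piece $\int(\Delta u\cdot\nabla)d\cdot\Delta d\,\mathrm dx$ is precisely the partner of the fluid stress term discussed below.

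The heart of the matter — and the step I expect to be the main obstacle — is the genuinely coupled, highest-order interaction: the director stress $-\partial_j(\partial_id^kW_{p^k_j})$ forcing the velocity estimate and the velocity convection $(u\cdot\nabla)d$ forcing the director estimate each produce a trilinear term of the schematic shape $\int|\nabla d|\,|\nabla^2 d|\,|\nabla^2 u|\,\mathrm dx$. Estimated crudely via Hölder, the Sobolev embedding $\|\nabla d\|_6\le C\|\nabla^2 d\|_2$, and Young's inequality, such a term only yields a bound containing $\|\nabla^2 d\|_2^6$, which is sextic and therefore inconsistent with the quartic right-hand side; it simply cannot be closed by itself. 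The resolution is to perform the integrations by parts so that, upon adding the velocity and director inequalities, the two copies of this term appear with opposite signs and cancel, mirroring the cancellation underlying Lemma \ref{lem1}; what survives after the cancellation is quartic. Finally every remaining term is recast in the form $\int(|u|^2+|\nabla d|^2)(|\nabla u|^2+|\nabla^2 d|^2)\,\mathrm dx$ — absorbing any residual top-order remnants into the reserved fractions $\tfrac12 a\|\nabla^3 d\|_2^2$ and $\tfrac12\|\nabla^2 u\|_2^2$ — with the aid of Hölder's inequality, $\|\nabla d\|_6\le C\|\nabla^2 d\|_2$, and the Gagliardo--Nirenberg inequality $\|f\|_3\le C\|f\|_2^{1/2}\|\nabla f\|_2^{1/2}$. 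Verifying the exact sign structure of the cancellation, and the patient bookkeeping of the many quartic terms so that each lands in the stated product form, is where the real work lies.
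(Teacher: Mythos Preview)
The paper does not supply its own proof of this lemma; it simply cites Lemma~3.2 of Hong--Li--Xin~\cite{Hong1}. Your overall strategy---testing the momentum equation with $-\Delta u$, applying $\Delta$ to the director equation and pairing with $\Delta d$, then extracting the $\nabla^3 d$-dissipation from the ellipticity $W_{p_\alpha^ip_\beta^j}\xi_\alpha^i\xi_\beta^j\ge a|\xi|^2$---is the standard one and is certainly workable.

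However, the step you single out as ``the heart of the matter'' rests on a misconception. The trilinear term $\int|\nabla d|\,|\nabla^2 d|\,|\nabla^2 u|\,\mathrm dx$ is \emph{not} an obstruction here: by plain Young's inequality,
\[
\int|\nabla d|\,|\nabla^2 d|\,|\nabla^2 u|\,\mathrm dx\le \epsilon\|\nabla^2 u\|_2^2+C_\epsilon\int|\nabla d|^2|\nabla^2 d|^2\,\mathrm dx,
\]
and $\int|\nabla d|^2|\nabla^2 d|^2\,\mathrm dx$ is already one of the four summands in the target right-hand side $\int(|u|^2+|\nabla d|^2)(|\nabla u|^2+|\nabla^2 d|^2)\,\mathrm dx$. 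Your sextic bound $\|\nabla^2 d\|_2^6$ only appears because you first pass through Sobolev to convert the integral into a product of norms; but the stated right-hand side is itself an integral, so that detour is unnecessary. Similarly, the companion term $\int|\nabla d|^3|\nabla^2 u|\,\mathrm dx$ yields $\epsilon\|\nabla^2 u\|_2^2+C\int|\nabla d|^6\,\mathrm dx$, and using $|\nabla d|^2=-d\cdot\Delta d$ one gets $\int|\nabla d|^6\,\mathrm dx\le\int|\nabla d|^2|\nabla^2 d|^2\,\mathrm dx$, again of the target form. So the cancellation you propose---while it does occur in the one-constant case---is not needed for this inequality, and in the general Oseen--Frank setting it is not clear the two highest-order pieces match exactly anyway. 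The genuine labour lies where you say it does at the end: organising the commutator terms from the $d$-dependent coefficients (and, for the middle convection piece $\int\partial_\gamma u\!\cdot\!\nabla\partial_\gamma d\cdot\Delta d\,\mathrm dx$, integrating by parts once more to trade $|\nabla u||\nabla^2 d|^2$ for $|u||\nabla^2 d||\nabla^3 d|$) so that each lands in the quartic form.
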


The proof of Theorem 1.1 relies on the following two propositions.
\begin{proposition}\label{prop1}
Let $(u,d)$ be a strong solution to system (\ref{main})--(\ref{initial cond}) on $\mathbb R^3\times(0,T)$. For any $t\in(0,T)$, def\mbox{}ine
$$
m(t)=\sup\limits_{0\leq s\leq t}(\|u(s)\|_2+\|\nabla d(s)\|_2)(\|\nabla u(s)\|_2+\|\nabla^2 d(s)\|_2).
$$
Then we have
\begin{eqnarray*}
&&\frac{\mathrm d}{\mathrm dt}\int(|u|^2+|\nabla d|^2+2W(d, \nabla d))\mathrm dx+\int(|\nabla u|^2+a|\nabla^2 d|^2)\mathrm dx\leq 0,\\
&&\frac{\mathrm d}{\mathrm dt}\int(|\nabla u|^2+|\nabla^2 d|^2)\mathrm dx+\frac12\int(|\nabla^2 u|^2+a|\nabla^3 d|^2)\mathrm dx\leq0,
\end{eqnarray*}
for any $t\in(0,T)$, as long as $ m(t)\leq\varepsilon_1,$ where $\varepsilon_1$ is a small constant depending only on $a$.
\end{proposition}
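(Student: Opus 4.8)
The plan is to derive the two differential inequalities from the three energy estimates already collected in Lemmas \ref{lem1}, \ref{lem2}, \ref{lem3}, by showing that under the smallness assumption $m(t)\le\varepsilon_1$ all the cubic/quartic error terms on the right-hand sides can be absorbed into the good dissipation terms on the left. The two key analytic tools will be the Gagliardo--Nirenberg interpolation inequalities in $\mathbb R^3$, in the forms $\|f\|_4^2\le C\|f\|_2^{1/2}\|\nabla f\|_2^{3/2}$ and $\|f\|_6\le C\|\nabla f\|_2$, together with Young's inequality to split products into a small-coefficient dissipative piece and a piece controlled by $m(t)$.

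First I would establish the first inequality. Add the identity of Lemma \ref{lem1} (written as $\frac{\mathrm d}{\mathrm dt}\int(\frac{|u|^2}{2}+W)\,\mathrm dx+\int(|\nabla u|^2+|\partial_td+(u\cdot\nabla)d|^2)\,\mathrm dx=0$, and note $\int|\partial_td+(u\cdot\nabla)d|^2\ge0$, which already controls nothing new) to a suitable multiple of the inequality of Lemma \ref{lem2}; more precisely, since $W(d,\nabla d)\ge a|\nabla d|^2\ge 0$ and $|W|\le C|\nabla d|^2$, the quantity $\int(|u|^2+|\nabla d|^2+2W)\,\mathrm dx$ is comparable to $\|u\|_2^2+\|\nabla d\|_2^2$. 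Combining $2\times$Lemma \ref{lem1} with Lemma \ref{lem2} gives
\[
\frac{\mathrm d}{\mathrm dt}\int(|u|^2+|\nabla d|^2+2W)\,\mathrm dx+\int(2|\nabla u|^2+2a|\nabla^2 d|^2)\,\mathrm dx\le C\int(|u|^2+|\nabla d|^2)|\nabla^2 d|\,\mathrm dx.
\]
The right-hand side is bounded, via Cauchy--Schwarz and then Gagliardo--Nirenberg, by
\[
C\big(\|u\|_4^2+\|\nabla d\|_4^2\big)\|\nabla^2 d\|_2
\le C\big(\|u\|_2^{1/2}\|\nabla u\|_2^{3/2}+\|\nabla d\|_2^{1/2}\|\nabla^2 d\|_2^{3/2}\big)\|\nabla^2 d\|_2,
\]
and Young's inequality turns each term into $\eta(\|\nabla u\|_2^2+\|\nabla^2 d\|_2^2)+C_\eta(\|u\|_2^2+\|\nabla d\|_2^2)(\|\nabla u\|_2^2+\|\nabla^2 d\|_2^2)$-type expressions — but the crucial point is that the coefficient multiplying the dissipation always carries a factor that is $\le C\,m(t)$ (for instance $\|u\|_2^{1/2}\|\nabla u\|_2^{-1/2}\cdot\|\nabla u\|_2^2\le\|u\|_2\|\nabla u\|_2\cdot\|\nabla u\|_2\le m(t)\|\nabla u\|_2^2$ after a further Young split, or more directly one bounds the whole right side by $C\,m(t)(\|\nabla u\|_2^2+\|\nabla^2 d\|_2^2)$). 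Choosing $\varepsilon_1$ so that $Cm(t)\le a$ absorbs everything and leaves $\int(|\nabla u|^2+a|\nabla^2 d|^2)\,\mathrm dx$ on the left, which is the first claim.

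The second inequality is handled the same way starting from Lemma \ref{lem3}: its right-hand side $C\int(|u|^2+|\nabla d|^2)(|\nabla u|^2+|\nabla^2 d|^2)\,\mathrm dx$ is estimated by Hölder with exponents $(3,3/2)$ as $C\big(\|u\|_6^2+\|\nabla d\|_6^2\big)\big(\|\nabla u\|_3^2+\|\nabla^2 d\|_3^2\big)$, then $\|u\|_6\le C\|\nabla u\|_2$, $\|\nabla d\|_6\le C\|\nabla^2 d\|_2$, and $\|\nabla u\|_3^2\le C\|\nabla u\|_2\|\nabla^2 u\|_2$, $\|\nabla^2 d\|_3^2\le C\|\nabla^2 d\|_2\|\nabla^3 d\|_2$; one more Young split produces a bound of the form $C(\|\nabla u\|_2^2+\|\nabla^2 d\|_2^2)(\|\nabla^2 u\|_2^2+\|\nabla^3 d\|_2^2)^{1/2}\cdot(\dots)$ which, using $\|\nabla u\|_2^2+\|\nabla^2 d\|_2^2\le m(t)\cdot\frac{1}{\|u\|_2+\|\nabla d\|_2}\cdots$ — cleaner is to observe directly that $(\|\nabla u\|_2+\|\nabla^2 d\|_2)^2\le m(t)$ is false in general, so instead one keeps $\|\nabla u\|_2^2+\|\nabla^2 d\|_2^2$ as the quantity being differentiated and extracts a factor $\|u\|_2+\|\nabla d\|_2\le \sqrt{E_0}$ from the low-order norms via the already-proven first inequality (which shows $\|u(t)\|_2^2+\|\nabla d(t)\|_2^2$ is nonincreasing, hence bounded by its initial value, and in fact the product with $\|\nabla u\|_2+\|\nabla^2 d\|_2$ stays $\le m(t)$). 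After this bookkeeping the right side of Lemma \ref{lem3} is $\le Cm(t)\int(|\nabla^2 u|^2+|\nabla^3 d|^2)\,\mathrm dx$, and taking $\varepsilon_1$ small (so $Cm(t)\le a/2$ and $\le1/2$) yields $\frac{\mathrm d}{\mathrm dt}\int(|\nabla u|^2+|\nabla^2 d|^2)\,\mathrm dx+\frac12\int(|\nabla^2 u|^2+a|\nabla^3 d|^2)\,\mathrm dx\le0$.

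The main obstacle I anticipate is the bookkeeping that guarantees every absorbed term genuinely carries a factor bounded by $Cm(t)$ rather than merely by a small fixed constant: one must be careful to split each cubic/quartic term so that \emph{both} a piece of the top-order dissipation and a piece controlled by the product $(\|u\|_2+\|\nabla d\|_2)(\|\nabla u\|_2+\|\nabla^2 d\|_2)=O(m(t))$ appear, exploiting the scaling-critical nature of $m(t)$. This is exactly where the precise exponents in the Gagliardo--Nirenberg inequalities matter — the powers $1/2$ and $3/2$ (resp. the $L^6$-$L^3$ pairing) are what make the homogeneity work out so that the leftover factor is the scale-invariant quantity $m(t)$ and nothing worse. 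Once the two inequalities are in hand for any fixed $t$ with $m(t)\le\varepsilon_1$, the statement is proved; the closing of the bootstrap (showing $m(t)$ actually stays below $\varepsilon_1$ for all time, hence global existence via the blow-up criterion of Lemma \ref{lem0}) is presumably the content of the next proposition and is not needed here.
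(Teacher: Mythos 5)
Your treatment of the first inequality is correct and is essentially the paper's argument: add Lemma \ref{lem1} and Lemma \ref{lem2}, bound the cubic term by $C(\|u\|_4^2+\|\nabla d\|_4^2)\|\nabla^2 d\|_2$, interpolate $L^4$ between $L^2$ and $L^6$, and absorb. (One small correction: the surviving factor is $m(t)^{1/2}$, not $m(t)$, since the bound is $C(\|u\|_2+\|\nabla d\|_2)^{1/2}(\|\nabla u\|_2+\|\nabla^2 d\|_2)^{5/2}=C\,[(\|u\|_2+\|\nabla d\|_2)(\|\nabla u\|_2+\|\nabla^2 d\|_2)]^{1/2}(\|\nabla u\|_2+\|\nabla^2 d\|_2)^{2}$; this is harmless.)

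The second inequality, however, has a genuine gap. After your H\"older/Sobolev step you correctly reach the intermediate bound
$C(\|\nabla u\|_2^2+\|\nabla^2 d\|_2^2)(\|\nabla u\|_2+\|\nabla^2 d\|_2)(\|\nabla^2 u\|_2+\|\nabla^3 d\|_2)$,
i.e.\ $CB^3D$ with $B$ the first-order and $D$ the second-order norms. To absorb this you must convert it into $C\,m(t)\,D^2$, and since $m(t)$ controls only the product $AB$ (with $A=\|u\|_2+\|\nabla d\|_2$), you must trade one power of $B$ for $A$ and $D$. The step that does this — and the one missing from your proposal — is the integration-by-parts interpolation
$\|\nabla u\|_2^2+\|\nabla^2 d\|_2^2=-\int(u\cdot\Delta u+\partial_i d\cdot\partial_i\Delta d)\,\mathrm dx\le(\|u\|_2+\|\nabla d\|_2)(\|\nabla^2 u\|_2+\|\nabla^3 d\|_2)$,
i.e.\ $B^2\le AD$, whence $B^3D\le (AD)BD=ABD^2\le m(t)D^2$. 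Your substitute — extracting $\|u\|_2+\|\nabla d\|_2\le\sqrt{E_0}$ from the energy decay — does not work: it leaves a bound of the form $C\sqrt{E_0}\,B^2\cdot BD$ in which the coefficient of the dissipation is controlled by the initial energy rather than by $m(t)$, and no Young split of $B^3D$ against $D^2$ alone can produce a small coefficient; the remainder $B^6$ is not controlled. Moreover, the Proposition must hold under the sole hypothesis $m(t)\le\varepsilon_1$ with $\varepsilon_1$ depending only on $a$, so an $E_0$-dependent smallness condition would not even prove the stated result. You correctly observed that $(\|\nabla u\|_2+\|\nabla^2 d\|_2)^2\le m(t)$ is false in general, but the resolution is the interpolation $B^2\le AD$, not the uniform bound on $A$.
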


\begin{proof}
By virtue of Lemma \ref{lem1} and \ref{lem2}, we have
\begin{eqnarray}
&&\frac{\mathrm d}{\mathrm dt}\int(|u|^2+|\nabla d|^2+2W(d, \nabla d))\mathrm dx+\int(2|\nabla u|^2+2a|\nabla^2 d|^2)\mathrm dx\nonumber\\
&\leq& C\int(|u|^2+|\nabla d|^2)|\nabla^2 d|\mathrm dx.\label{p1}
\end{eqnarray}
The right-hand term is then estimated by H\"older and Sobolev embedding inequalities
\begin{eqnarray*}
\int(|u|^2+|\nabla d|^2)|\nabla^2 d|\mathrm dx&\leq& C(\|u\|_4^2+\|\nabla d\|_4^2)\|\nabla^2 d\|_2\\
&\leq& C(\|u\|_2+\|\nabla d\|_2)^\frac12(\|u\|_6+\|\nabla d\|_6)^\frac32\|\nabla^2 d\|_2\\
&\leq& C(\|u\|_2+\|\nabla d\|_2)^\frac12(\|\nabla u\|_2+\|\nabla^2 d\|_2)^\frac32\|\nabla^2 d\|_2\\
&\leq& C(\|u\|_2+\|\nabla d\|_2)^\frac12(\|\nabla u\|_2+\|\nabla^2 d\|_2)^\frac52\\
&\leq& Cm(t)^\frac12(\|\nabla u\|^2_2+\|\nabla^2 d\|^2_2)\\
&\leq& C\varepsilon_1^\frac12(\|\nabla u\|^2_2+\|\nabla^2 d\|^2_2)\\
&\leq&\|\nabla u\|^2_2+a\|\nabla^2 d\|^2_2,
\end{eqnarray*}
as long as $m(t)\leq\varepsilon_1$, and $\varepsilon_1$ is small enough. Thus it follows from (\ref{p1}) that
\begin{eqnarray*}
\frac{\mathrm d}{\mathrm dt}\int(|u|^2+|\nabla d|^2+2W(d, \nabla d))\mathrm dx+\int(|\nabla u|^2+a|\nabla^2 d|^2)\mathrm dx\leq0,
\end{eqnarray*}
the f\mbox{}irst conclusion holds.

By virtue of Lemma \ref{lem3}, we have
\begin{eqnarray}
&&\frac{\mathrm d}{\mathrm dt}\int(|\nabla u|^2+|\nabla^2 d|^2)\mathrm dx+\int(|\nabla^2 u|^2+\frac32a|\nabla^3 d|^2)\mathrm dx\nonumber\\
&\leq& C\int(|u|^2+|\nabla d|^2)(|\nabla u|^2+|\nabla^2 d|^2)\mathrm dx.\label{plem3}
\end{eqnarray}
By the H\"older and Sobolev embedding inequalities, we have
\begin{eqnarray}
&&\int(|u|^2+|\nabla d|^2)(|\nabla u|^2+|\nabla^2 d|^2)\mathrm dx\nonumber\\
&\leq&C(\|u\|_6^2+\|\nabla d\|_6^2)(\|\nabla u\|_2+\|\nabla^2 d\|_2)(\|\nabla u\|_6+\|\nabla^2 d\|_6)\nonumber\\
&\leq&C(\|\nabla u\|^2_2+\|\nabla^2 d\|^2_2)(\|\nabla u\|_2+\|\nabla^2 d\|_2)(\|\nabla^2 u\|_2+\|\nabla^3 d\|_2).\label{p2}
\end{eqnarray}
It follows from integrating by parts that
\begin{eqnarray}
&&\int(|\nabla u|^2+|\nabla^2 d|^2)\mathrm dx=\int(\partial_iu\cdot\partial_i u+\partial_{ij}^2d\cdot\partial_{ij}^2d)\mathrm dx\nonumber\\
&=&-\int (u\cdot\Delta u+\partial_i d\cdot\partial_i \Delta d)\mathrm dx\leq (\|u\|_2+\|\nabla d\|_2)(\|\nabla^2 u\|_2+\|\nabla^3 d\|_2),
\end{eqnarray}
and thus, recalling (\ref{p2}), we have
\begin{eqnarray}
&&\int(|u|^2+|\nabla d|^2)(|\nabla u|^2+|\nabla^2 d|^2)\mathrm dx\nonumber\\
&\leq&C(\|u\|_2+\|\nabla d\|_2)(\|\nabla u\|_2+\|\nabla^2 d\|_2)(\|\nabla^2 u\|^2_2+\|\nabla^3 d\|^2_2)\nonumber\\
&\leq&Cm(t)(\|\nabla^2 u\|^2_2+\|\nabla^3 d\|^2_2)\leq C\varepsilon_1(\|\nabla^2 u\|^2_2+\|\nabla^3 d\|^2_2)\nonumber\\
&\leq&\frac12(\|\nabla^2 u\|^2_2+2a\|\nabla^3 d\|^2_2),
\end{eqnarray}
provided $m(t)\leq\varepsilon_1$, and $\varepsilon_1$ is small.
Substituting this inequality into (\ref{plem3}), one gets the second conclusion.
\end{proof}

By the aid of the above proposition, we can establish the uniform estimates in time on the strong solutions of (\ref{main})-(\ref{initial cond}) as in the following proposition.

\begin{proposition}\label{prop2}
Let $(u, d)$ be a strong solution to system (\ref{main})-(\ref{initial cond}) on $\mathbb R^3\times(0, T)$. Then there is a small positive constant $\varepsilon_0$, such that if
$$(\|u_0\|_2+\|\nabla d_0\|_2)(\|\nabla u_0\|_2+\|\nabla^2 d_0\|_2)\leq \varepsilon_0,$$
then
$$\sup\limits_{0\leq s\leq t}(\|u\|_{H^1}^2+\|\nabla d\|_{H^1}^2)+\int_0^t(\|\nabla u\|_{H^1}^2+\|\nabla^2 d\|_{H^1}^2)\mathrm ds\leq C(\|u_0\|_{H^1}^2+\|\nabla d_0\|_{H^1}^2),$$
for any $t\in(0,T)$, where $C$ is a positive constant independent of $T$.
\end{proposition}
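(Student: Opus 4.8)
The plan is to run a continuity (bootstrap) argument on the quantity $m(t)$ defined in Proposition~\ref{prop1}, using the two differential inequalities established there together with the local existence and blow-up criterion of Lemma~\ref{lem0}. First I would set
$$
\Phi(t)=\sup_{0\le s\le t}\big(\|u(s)\|_{H^1}^2+\|\nabla d(s)\|_{H^1}^2\big)+\int_0^t\big(\|\nabla u\|_{H^1}^2+\|\nabla^2 d\|_{H^1}^2\big)\,\mathrm ds,
$$
and note that at $t=0$ we have $m(0)=(\|u_0\|_2+\|\nabla d_0\|_2)(\|\nabla u_0\|_2+\|\nabla^2 d_0\|_2)\le\varepsilon_0$. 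Choose $\varepsilon_0$ small enough (depending only on $a$, hence on $k_1,k_2,k_3$) that $\varepsilon_0<\varepsilon_1/2$, where $\varepsilon_1$ is the constant from Proposition~\ref{prop1}. Define
$$
t^*=\sup\{t\in(0,T):m(s)\le\varepsilon_1\ \text{for all }s\in[0,t]\}.
$$
Since $m$ is continuous in $t$ and $m(0)<\varepsilon_1$, we have $t^*>0$; the goal is to show $t^*=T$ by proving $m(t)\le\varepsilon_1/2$ on $[0,t^*)$, which via continuity prevents $m$ from ever reaching $\varepsilon_1$.

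On $[0,t^*)$ both conclusions of Proposition~\ref{prop1} hold, so both energies are non-increasing. From the first inequality, integrating in time, $\|u(t)\|_2^2+\|\nabla d(t)\|_2^2+2\int W(d,\nabla d)\,\mathrm dx$ is non-increasing; using $a|\nabla d|^2\le W(d,\nabla d)\le C|d|^2|\nabla d|^2=C|\nabla d|^2$ (since $|d|=1$), this gives a bound on $\|u(t)\|_2+\|\nabla d(t)\|_2$ by $C(\|u_0\|_2+\|\nabla d_0\|_2)$ for all $t\in[0,t^*)$, together with $\int_0^t(\|\nabla u\|_2^2+a\|\nabla^2 d\|_2^2)\,\mathrm ds\le C(\|u_0\|_2^2+\|\nabla d_0\|_2^2)$. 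From the second inequality, $\|\nabla u(t)\|_2^2+\|\nabla^2 d(t)\|_2^2$ is non-increasing, hence bounded by $\|\nabla u_0\|_2^2+\|\nabla^2 d_0\|_2^2$, and its time-integral of higher derivatives is likewise controlled. Multiplying the two pointwise bounds,
$$
m(t)=\sup_{0\le s\le t}(\|u\|_2+\|\nabla d\|_2)(\|\nabla u\|_2+\|\nabla^2 d\|_2)\le C(\|u_0\|_2+\|\nabla d_0\|_2)(\|\nabla u_0\|_2+\|\nabla^2 d_0\|_2)\le C\varepsilon_0,
$$
and since the constant $C$ here depends only on $a$, shrinking $\varepsilon_0$ so that $C\varepsilon_0\le\varepsilon_1/2$ yields $m(t)\le\varepsilon_1/2$ on $[0,t^*)$. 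By continuity $t^*$ cannot be an interior point of $(0,T)$ where $m$ equals $\varepsilon_1$, so $t^*=T$; thus $m(t)\le\varepsilon_1/2$ on all of $(0,T)$. Feeding this back, $\Phi(t)\le C(\|u_0\|_{H^1}^2+\|\nabla d_0\|_{H^1}^2)$ with $C$ independent of $T$, which is the assertion.

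The step requiring the most care is the bootstrap closure: one must verify that the constant multiplying $\varepsilon_0$ in the final estimate for $m(t)$ genuinely depends only on $a$ (through the equivalence $a|\nabla d|^2\le W\le C|\nabla d|^2$ and the absorbed Sobolev constants inside Proposition~\ref{prop1}), and not on $T$ or on higher norms of the data — otherwise the smallness of $\varepsilon_0$ could not be fixed in advance. Once the uniform bound $\Phi(t)\le C(\|u_0\|_{H^1}^2+\|\nabla d_0\|_{H^1}^2)$ is in hand on every interval of existence, it is routine to combine it with Lemma~\ref{lem0} to obtain the global strong solution: the blow-up criterion states that at a finite maximal time $\mathcal T$ one would have $\sup_{[0,T]}(\|u\|_{H^1}^2+\|\nabla d\|_{H^1}^2)\to\infty$ as $T\to\mathcal T-$, which contradicts the uniform bound; uniqueness is already part of Lemma~\ref{lem0}. (Strictly, uniqueness and global existence are the content of Theorem~\ref{thm}; Proposition~\ref{prop2} itself only needs the a priori bound, so I would stop once $\Phi(t)\le C(\|u_0\|_{H^1}^2+\|\nabla d_0\|_{H^1}^2)$ is established.)
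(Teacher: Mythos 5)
Your argument is correct and is essentially the same as the paper's: both define the stopping time where $m(t)\leq\varepsilon_1$, integrate the two differential inequalities of Proposition \ref{prop1} (using $a|\nabla d|^2\leq W(d,\nabla d)\leq C|\nabla d|^2$ to handle the $W$ term), multiply the resulting bounds to close the bootstrap with $m(t)\leq C\varepsilon_0\leq\varepsilon_1/2$, and conclude by continuity that the stopping time equals $T$. Your closing remark that the global-existence/uniqueness discussion belongs to Theorem \ref{thm} rather than to this proposition also matches the paper's organization.
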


\begin{proof}
Let $m(t)$ be the function as def\mbox{}ined in Proposition \ref{prop1}. Recalling the regularity properties of strong solutions and the def\mbox{}inition of $m(t)$, one can easily see that $m(t)$ is continuously increasing on $[0, T)$. By assumption, it is obvious that $m(0)\leq \varepsilon_0.$
Def\mbox{}ine $T_0=\sup\{t\in (0, T)\,|\,m(t)\leq \varepsilon_1\}$.
By Proposition \ref{prop1}, we have
\begin{eqnarray*}
&&\frac{\mathrm d}{\mathrm dt}\int(|u|^2+|\nabla d|^2+2W(d, \nabla d))\mathrm dx+\int(|\nabla u|^2+a|\nabla^2 d|^2)\mathrm dx\leq 0,\\
&&\frac{\mathrm d}{\mathrm dt}\int(|\nabla u|^2+|\nabla^2 d|^2)\mathrm dx+\frac12\int(|\nabla^2 u|^2+a|\nabla^3 d|^2)\mathrm dx\leq0,
\end{eqnarray*}
for any $t\in (0, T_0)$.

Integrating the above two inequalities in $t$, we have
\begin{equation}
\sup\limits_{0\leq s\leq t}(\|u\|^2_2+\|\nabla d\|^2_2)+\int_0^t(\|\nabla u\|^2_2+\|\nabla^2 d\|^2_2)\mathrm ds\leq C(\|u_0\|^2_2+\|\nabla d_0\|^2_2),\label{p2f1}
\end{equation}
and
\begin{equation}
\sup\limits_{0\leq s\leq t}(\|\nabla u\|^2_2+\|\nabla^2 d\|^2_2)+\int_0^t(\|\nabla^2 u\|^2_2+\|\nabla^3 d\|^2_2)\mathrm ds
\leq C(\|\nabla u_0\|^2_2+\|\nabla^2 d_0\|^2_2),\label{p2f2}
\end{equation}
for any $t\in (0, T_0)$.

Then it follows from the above two estimates that, for any $t\in (0, T_0)$,
\begin{eqnarray*}
m(t)&\leq&\sup\limits_{0\leq s\leq t}(\|u(s)\|_2+\|\nabla d(s)\|_2)\sup\limits_{0\leq s\leq t}(\|\nabla u(s)\|_2+\|\nabla^2 d(s)\|_2)\\
&\leq&C(\|u_0\|_2+\|\nabla d_0\|_2)(\|\nabla u_0\|_2+\|\nabla^2 d_0\|_2)\leq C\varepsilon_0\leq\frac{\varepsilon_1}2,
\end{eqnarray*}
by choosing $\varepsilon_0\leq\frac{\varepsilon_1}{2C}$, where $\varepsilon_1$ is the constant in Proposition \ref{prop1}.

If $T_0<T,$ then the above estimates implies that there is a $T_1>T_0$, such that
$$\sup\limits_{0\leq s\leq T_1}m(t)\leq \varepsilon_1,$$
from which, recalling the def\mbox{}inition of $T_0,$ it has $T_1\leq T_0,$ contradicting to $T_1>T_0.$
Thus it must have $T_0=T,$ and consequently (\ref{p2f1}) and (\ref{p2f2}) hold for any $t\in(0, T).$
Therefore,
$$\sup\limits_{0\leq s\leq t}(\|u\|_{H^1}^2+\|\nabla d\|_{H^1}^2)+\int_0^t(\|\nabla u\|_{H^1}^2+\|\nabla^2 d\|_{H^1}^2)\mathrm ds\leq C(\|u_0\|_{H^1}^2+\|\nabla d_0\|_{H^1}^2),$$
proving the conclusion.
\end{proof}

\begin{proof}[\textbf{Proof of Theorem \ref{thm}}]
Let $\varepsilon_0$ be the constant in Proposition \ref{prop2}, and suppose the conditions in Theorem \ref{thm} hold. By the local existence result, Lemma \ref{lem0}, there is a local strong solution $(u,d)$ to system (\ref{main})--(\ref{initial cond}). Extend such local solution to the maximal existence time $T^*$. We are going to prove that $T^*=\infty$. Suppose, by contradiction, that $T^*<\infty$.
By Proposition \ref{prop2}, it has
$$\sup\limits_{0\leq s\leq t}(\|u\|_{H^1}^2+\|\nabla d\|_{H^1}^2)+\int_0^t(\|\nabla u\|_{H^1}^2+\|\nabla^2 d\|_{H^1}^2)\mathrm ds\leq C(\|u_0\|_{H^1}^2+\|\nabla d_0\|_{H^1}^2),$$
for any $t\in(0,T^*)$. On account of this estimates uniform in time, one can apply the local existence result, Lemma \ref{lem0}, to extend $(u,d)$ to a strong solution beyond $T^*$, contradicting to the def\mbox{}inition of $T^*$. This contradiction implies that $T^*=\infty$, and thus $(u,d)$ is a global strong solution. The uniqueness of global strong solutions is a direct consequence of that for local ones. This completes the proof.
\end{proof}

\section*{Acknowledgments}
{This work is partially supported by the Institute of Mathematical Sciences, the Chinese University of Hong Kong.}
\par

\end{document}